\title{Axiomatizations of signed discrete Choquet integrals}
\author{Marta Cardin}
\address{Department of Applied Mathematics, University Ca' Foscari of Venice \\
Dorsoduro 3825/E--30123, Venice, Italy} \email{mcardin[at]unive.it }
\author{Miguel Couceiro}
\address{Mathematics Research Unit, FSTC, University of Luxembourg \\
6, rue Coudenhove-Kalergi, L-1359 Luxembourg, Luxembourg}%
\email{miguel.couceiro[at]uni.lu }
\author{Silvio Giove}
\address{Department of Applied Mathematics, University Ca' Foscari of Venice \\
Dorsoduro 3825/E--30123, Venice, Italy} \email{sgiove[at]unive.it }
\author{Jean-Luc Marichal}
\address{Mathematics Research Unit, FSTC, University of Luxembourg \\
6, rue Coudenhove-Kalergi, L-1359 Luxembourg, Luxembourg}%
\email{jean-luc.marichal[at]uni.lu }
\date{October 21, 2010}
\begin{document}

\theoremstyle{plain}
\newtheorem{theorem}{Theorem}[section]
\newtheorem{lemma}[theorem]{Lemma}
\newtheorem{proposition}[theorem]{Proposition}
\newtheorem{corollary}[theorem]{Corollary}
\newtheorem{fact}[theorem]{Fact}
\newtheorem*{main}{Main Theorem}

\theoremstyle{definition}
\newtheorem{definition}[theorem]{Definition}
\newtheorem{example}[theorem]{Example}

\theoremstyle{remark}
\newtheorem*{conjecture}{\indent Conjecture}
\newtheorem{remark}{\indent Remark}
\newtheorem*{claim}{Claim}

\newcommand{\N}{\mathbb{N}}                     
\newcommand{\R}{\mathbb{R}}                     
\newcommand{\Vspace}{\vspace{2ex}}                  
\newcommand{\bfx}{\mathbf{x}}

\begin{abstract}
We study the so-called signed discrete Choquet integral (also called non-monotonic discrete Choquet integral) regarded as the Lov\'asz extension
of a pseudo-Boolean function which vanishes at the origin. We present axiomatizations of this generalized Choquet integral, given in terms of
certain functional equations, as well as by necessary and sufficient conditions which reveal desirable properties in aggregation theory.
 \end{abstract}

\keywords{Signed discrete Choquet integral, signed capacity, Lov\'asz extension, functional equation, comonotonic additivity, homogeneity, axiomatization}

\subjclass[2010]{Primary 39B22, 39B72; Secondary 26B35}

\maketitle

\section{Introduction}

This paper deals with the so-called ``signed (discrete) Choquet integral" (also called non-monotonic Choquet integral) which naturally
generalizes the Choquet integral \cite{Cho53}. Traditionally, the  Choquet integral is defined in terms of a  capacity (also called fuzzy
measure \cite{Sug74,Sug77}), i.e., a set function $\mu\colon 2^{[n]}\to\R$ such that $\mu(\varnothing)=0$ and $\mu(S)\leqslant \mu(T)$ whenever
$S\subseteq T$. Dropping the monotonicity requirement in the definition of $\mu$, we obtain what is referred to as a signed capacity (also
called non-monotonic fuzzy measure). The signed Choquet integral is then defined exactly the same way but replacing the underlying capacity by a
signed capacity. This extension has been considered by several authors, e.g., \cite{DeWWak01,MurSugMac94,Sch86}.

A convenient way to introduce the signed Choquet integral is via the notion of Lov\'asz extension. Indeed, the signed Choquet integral can be
thought of as the Lov\'asz extension of a pseudo-Boolean function $f\colon\{0,1\}^n\to\R$ which vanishes at the origin. Moreover, we retrieve
the classical Choquet integral by further assuming that $f\colon\{0,1\}^n\to\R$  is nondecreasing.

In this paper we consider the latter approach to the signed Choquet integral. In Section 2 we recall the basic notions and terminology
concerning Choquet integrals and Lov\'asz esxtensions needed throughout the paper. In Section 3 we present various characterizations of the
signed Choquet integral. First, we recall the piecewise linear nature of Lov\'asz extensions which particularizes to the signed Choquet integral
(Theorem~\ref{Thm:3.1}). Then we generalize Schmeidler's axiomatization of the signed discrete Choquet integral given in terms of continuity and
comonotonic additivity, showing that positive homogeneity can be replaced for continuity (Theorem~\ref{Thm:3.2}). The main result of this paper,
Theorem~\ref{thm:sadf86}, presents a characterization of families of signed Choquet integrals in terms of necessary and sufficient conditions
which:
\begin{enumerate}
\item reveal the linear nature of these generalized Choquet integrals  with respect to the underlying signed capacities, \item express
properties of the family members defined on the standard basis of signed capacites, and \item make apparent the meaningfulness with respect to
interval scales of signed Choquet integrals.
\end{enumerate}
We also discuss the independence of axioms given in Theorem~\ref{thm:sadf86}.

Throughout this paper, the symbols $\wedge$ and $\vee$ denote the minimum and maximum functions, respectively.

\section{Choquet integrals and Lov\'asz extensions}

A \emph{capacity on $[n]$} is a set function $\mu\colon 2^{[n]}\to\R$ such that $\mu(\varnothing)=0$ and $\mu(S)\leqslant \mu(T)$ whenever
$S\subseteq T$. A capacity $\mu$ on $[n]$ is said to be \emph{normalized} if $\mu([n])=1$.

\begin{definition}
Let $\mu$ be a capacity on $[n]$ and let $\bfx\in [0,\infty[^n$. The \emph{Choquet integral} of $\bfx$ with respect to $\mu$ is defined by
$$
C_{\mu}(\bfx)=\sum_{i=1}^n(\mu_i^{\pi}-\mu_{i+1}^{\pi})\, x_{\pi(i)},
$$
where $\pi$ is a permutation on $[n]$ such that $x_{\pi(1)}\leqslant\cdots\leqslant x_{\pi(n)}$ and $\mu_i^{\pi}=\mu(\{\pi(i),\ldots,\pi(n)\})$
for $i\in [n+1]$, with the convention that $\mu_{n+1}^{\pi}=\mu(\varnothing)$.
\end{definition}

The concept of Choquet integral can be formally extended to more general set functions and $n$-tuples of $\R^n$ as follows. A \emph{signed
capacity} (or \emph{game}) on $[n]$ is a set function $v\colon 2^{[n]}\to\R$ such that $v(\varnothing)=0$.

\begin{definition}
Let $v$ be a signed capacity on $[n]$ and let $\bfx\in\R^n$. The \emph{signed Choquet integral} of $\bfx$ with respect to $v$ is defined by
$$
C_{v}(\bfx)=\sum_{i=1}^n(v_i^{\pi}-v_{i+1}^{\pi})\, x_{\pi(i)},
$$
where $\pi$ is a permutation on $[n]$ such that $x_{\pi(1)}\leqslant\cdots\leqslant x_{\pi(n)}$ and $v_i^{\pi}=v(\{\pi(i),\ldots,\pi(n)\})$ for
$i\in [n+1]$, with the convention that $v_{n+1}^{\pi}=v(\varnothing)$.
\end{definition}

The more general concept of a set function $v\colon 2^{[n]}\to\R$ (without any constraint) leads to the notion of the Lov\'asz extension of a
pseudo-Boolean function, which we now briefly describe. For general background, see \cite{Lov83,Sin84}.

Let $S_n$ denote the symmetric group on $[n]$ and, for each $\pi\in S_n$, define
$$
P_{\pi}=\{\bfx\in \R^n:x_{\pi(1)}\leqslant\cdots\leqslant x_{\pi(n)}\}.
$$
Let $v\colon 2^{[n]}\to\R$ be a set function and let $f\colon\{0,1\}^n\to\R$ be the corresponding pseudo-Boolean function, that is, such that
$f(\mathbf{1}_S)=v(S)$. The \emph{Lov\'asz extension} of $f$ is the continuous function $\hat f\colon \R^n\to\R$ which is defined on each
$P_{\pi}$ as the unique affine function that coincides with $f$ at the $n+1$ vertices of the standard simplex $[0,1]^n\cap P_{\pi}$ of
$[0,1]^n$. In fact, $\hat f$ can be expressed as
\begin{equation}\label{eq:1}
\hat f(\bfx)=f(\mathbf{0})+\sum_{i=1}^n(f_i^{\pi}-f_{i+1}^{\pi})\, x_{\pi(i)}\qquad (\bfx\in P_{\pi}).
\end{equation}
where $f_i^{\pi}=f(\mathbf{1}_{\{\pi(i),\ldots,\pi(n)\}})=v(\{\pi(i),\ldots,\pi(n)\})$ for $i\in [n]$ and $f_{n+1}^{\pi}=f(\mathbf{0})$. Thus
$\hat f$ is a continuous function whose restriction to each $P_{\pi}$ is an affine function.

It follows from (\ref{eq:1}) that the Lov\'asz extension of a pseudo-Boolean function $f\colon\{0,1\}^n\to\R$ is a signed Choquet integral if
and only if $f(\mathbf{0})=0$. Its restriction to $[0,\infty[^n$ is a Choquet integral if, in addition, $f$ is nondecreasing.

It was also shown \cite{Mar02b} that the Lov\'asz extension $\hat f$ can also be written as
\begin{equation}\label{eq:LovExtMob}
\hat f(\bfx)=\sum_{S\subseteq [n]}m(S)\bigwedge_{i\in S}x_i\qquad (\bfx\in \R^n),
\end{equation}
where the set function $m\colon 2^{[n]}\to\R$ is the M\"obius transform of $v$, given by $m(S)=\sum_{T\subseteq S}(-1)^{|S|-|T|}\, v(T)$. Thus,
a signed Choquet integral has the form (\ref{eq:LovExtMob}) with $m(\varnothing)=0$.

\section{Axiomatizations of Lov\'asz extensions}

We have a first characterization that immediately follows from the definition of Lov\'asz extensions.

\begin{theorem}\label{Thm:3.1}
A function $g\colon\R^n\to\R$ is a Lov\'asz extension if and only if
\begin{equation}\label{eq:ConvComb}
g(\lambda\bfx+(1-\lambda)\bfx')=\lambda\, g(\bfx)+(1-\lambda)\, g(\bfx')\qquad (0\leqslant\lambda\leqslant 1)
\end{equation}
for all comonotonic vectors $\bfx,\bfx'\in\R^n$. The function $g$ is a signed Choquet integral if additionally $g(\mathbf{0})=0$.
\end{theorem}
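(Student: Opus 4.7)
The plan is to exploit the piecewise affine structure of Lovász extensions and the fact that the comonotonicity classes $P_\pi$ are exactly the convex pieces on which this affinity holds. I write the proof as two directions and then tack on the signed Choquet integral clause via the observation already recorded after equation~(\ref{eq:1}).

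For the direction $(\Rightarrow)$, assume $g=\hat f$ is the Lovász extension of some $f\colon\{0,1\}^n\to\R$. By definition, $\hat f$ is affine on every $P_\pi$. Given comonotonic $\bfx,\bfx'\in\R^n$, there exists $\pi\in S_n$ with $\bfx,\bfx'\in P_\pi$. Since $P_\pi$ is defined by the linear inequalities $x_{\pi(1)}\leqslant\cdots\leqslant x_{\pi(n)}$, it is convex, so $\lambda\bfx+(1-\lambda)\bfx'\in P_\pi$ for $\lambda\in[0,1]$. Affinity of $g$ on $P_\pi$ then yields (\ref{eq:ConvComb}) immediately.

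For the direction $(\Leftarrow)$, suppose $g$ satisfies (\ref{eq:ConvComb}) on each comonotonicity class. Define $f\colon\{0,1\}^n\to\R$ by $f(\mathbf{1}_S)=g(\mathbf{1}_S)$ for every $S\subseteq[n]$, and let $\hat f$ be its Lovász extension. Fix $\pi\in S_n$. The condition (\ref{eq:ConvComb}) says that $g$ preserves every convex combination of two points of $P_\pi$, and a standard induction on the number of summands shows that $g$ then preserves arbitrary (finite) convex combinations, i.e.\ $g$ is affine on $P_\pi$. The standard simplex $[0,1]^n\cap P_\pi$ has vertices $\mathbf{0}$ and $\mathbf{1}_{\{\pi(i),\ldots,\pi(n)\}}$ for $i=1,\ldots,n$; these $n+1$ points are affinely independent (their differences form, up to reordering of coordinates, a triangular system with unit diagonal), so their affine hull is all of $\R^n$. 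By construction $g$ and $\hat f$ agree at these $n+1$ vertices, and both are affine on $P_\pi$; as an affine function on $\R^n$ is determined by its values on any affinely independent $(n+1)$-set, we conclude $g=\hat f$ on $P_\pi$. Since $\pi$ was arbitrary and $\R^n=\bigcup_{\pi\in S_n}P_\pi$, we obtain $g=\hat f$ everywhere, so $g$ is a Lovász extension.

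The addendum is immediate: as recalled just after equation~(\ref{eq:1}), the Lovász extension $\hat f$ is a signed Choquet integral precisely when $f(\mathbf{0})=0$, and $f(\mathbf{0})=g(\mathbf{0})$ by construction.

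The only mildly subtle point is the implication \emph{convex-combination property on a convex set $\Longrightarrow$ affinity on that set}, but this is routine: iterating (\ref{eq:ConvComb}) along the segment joining any two points of $P_\pi$ shows $g$ is affine on every such segment, which combined with the fact that $P_\pi$ has nonempty interior in $\R^n$ forces $g|_{P_\pi}$ to coincide with a genuine affine function on $\R^n$. No extra regularity on $g$ needs to be assumed, since the functional equation is required for \emph{all} $\lambda\in[0,1]$ and not merely for $\lambda=1/2$.
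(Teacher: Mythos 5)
Your proof is correct and follows essentially the same route as the paper: both reduce the claim to showing that preservation of convex combinations of comonotonic vectors forces $g$ to be affine on each cone $P_\pi$, whence $g$ coincides with the Lov\'asz extension of its restriction to $\{0,1\}^n$. You simply spell out the details the paper leaves implicit (affine independence of the $n+1$ simplex vertices, and the fact that the full range $\lambda\in[0,1]$ makes any regularity assumption unnecessary), which is a faithful elaboration rather than a different argument.
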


\begin{proof}
The condition stated in the theorem means that $g$ is affine (since it is both convex and concave) on each $P_{\pi}$. Hence, it is continuous on
$\R^n$ and thus it is a Lov\'asz extension.
\end{proof}

The following theorem is inspired from a characterization of the Choquet integral by de Campos and Bola\~{n}os \cite{deCBol92}.

\begin{theorem}\label{Thm:3.2}
A function $g\colon\R^n\to\R$ is a Lov\'asz extension if and only if the function $h\colon\R^n\to\R$, defined by $h=g-g(\mathbf{0})$,
\begin{enumerate}
\item[(i)] is comonotonic additive.

\item[(ii)] is continuous or satisfies $h(r\bfx)=r h(\bfx)$ for all $r>0$.
\end{enumerate}
The function $g$ is a signed Choquet integral if additionally $g(\mathbf{0})=0$.
\end{theorem}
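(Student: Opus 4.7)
The plan is to reduce the statement to Theorem~\ref{Thm:3.1} and verify the convex-combination identity (\ref{eq:ConvComb}) on comonotonic pairs. Since $h=g-g(\mathbf{0})$ differs from $g$ only by a constant, $g$ satisfies (\ref{eq:ConvComb}) if and only if $h$ does, so I would work throughout with $h$. Observe that comonotonic additivity applied with $\mathbf{x}=\mathbf{x}'=\mathbf{0}$ forces $h(\mathbf{0})=0$; this small remark will be reused below.

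For the necessity direction, suppose $g$ is a Lov\'asz extension. By Theorem~\ref{Thm:3.1} the restriction of $g$ to each cone $P_\pi$ is affine, and since $\mathbf{0}\in P_\pi$ for every $\pi$, the constant term of that affine function is $g(\mathbf{0})$. Hence $h|_{P_\pi}$ is linear. Linearity on each $P_\pi$ immediately yields continuity of $h$, positive homogeneity $h(r\mathbf{x})=r\,h(\mathbf{x})$ for $r>0$ (since $P_\pi$ is a cone), and comonotonic additivity (since any two comonotonic vectors lie in a common $P_\pi$).

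For the sufficiency direction, fix comonotonic $\mathbf{x},\mathbf{x}'\in P_\pi$ and $\lambda\in[0,1]$. Because $P_\pi$ is a cone, $\lambda\mathbf{x}$ and $(1-\lambda)\mathbf{x}'$ are again in $P_\pi$, hence comonotonic, so by (i)
\[
h\bigl(\lambda\mathbf{x}+(1-\lambda)\mathbf{x}'\bigr)=h(\lambda\mathbf{x})+h((1-\lambda)\mathbf{x}').
\]
It remains to establish the one-variable identity $h(r\mathbf{x})=r\,h(\mathbf{x})$ for $r\in[0,1]$. Under the second alternative of (ii) this is granted directly (with $r=0$ handled by $h(\mathbf{0})=0$). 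Under continuity I would run the standard Cauchy argument: iterating comonotonic additivity on $(\mathbf{x},\mathbf{x})$ gives $h(n\mathbf{x})=n\,h(\mathbf{x})$ for $n\in\N$, whence $h(\mathbf{x}/n)=h(\mathbf{x})/n$ and thus $h(q\mathbf{x})=q\,h(\mathbf{x})$ for every positive rational $q$; continuity then extends this to all real $r\ge 0$. Combining this with the displayed identity yields (\ref{eq:ConvComb}) for $h$, and therefore for $g$, so Theorem~\ref{Thm:3.1} finishes the job; the final sentence about signed Choquet integrals follows from the remark made just after~(\ref{eq:1}).

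The main obstacle I expect is the derivation of positive homogeneity from comonotonic additivity plus continuity, because the usual Cauchy argument needs the intermediate scalar multiples to sit in a region where additivity applies. This turns out not to bite, however, since the entire ray $\{r\mathbf{x}:r\ge 0\}$ lies inside a single $P_\pi$, so the comonotonicity constraint is automatic along the ray and the rational-to-real passage is standard.
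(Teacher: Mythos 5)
Your proof is correct, but it takes a different route from the paper's. The paper proves sufficiency directly: it decomposes $\bfx\in P_{\pi}$ into the comonotonic layers $x_{\pi(1)}\mathbf{1}_{[n]}+\sum_{i=2}^n(x_{\pi(i)}-x_{\pi(i-1)})\mathbf{1}_{\{\pi(i),\ldots,\pi(n)\}}$, applies comonotonic additivity and homogeneity termwise, and lands on the explicit formula $h(\bfx)=\sum_{i=1}^n(h_i^{\pi}-h_{i+1}^{\pi})x_{\pi(i)}$, i.e., formula (\ref{eq:1}) for the set function $S\mapsto h(\mathbf{1}_S)$. Because the coefficient $x_{\pi(1)}$ may be negative, that route needs an extra step ($h(-\mathbf{1}_{[n]})=-h(\mathbf{1}_{[n]})$, derived from comonotonic additivity of the constant vectors) to extend homogeneity to negative multiples of $\mathbf{1}_{[n]}$. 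You instead verify the affineness criterion (\ref{eq:ConvComb}) of Theorem~\ref{Thm:3.1} on comonotonic pairs, which only ever requires scalars in $[0,1]$, so the negative-scalar issue disappears entirely; the price is that you get existence of the Lov\'asz extension abstractly rather than its explicit representation and underlying set function. The continuity-implies-positive-homogeneity step (the Cauchy argument along the ray $\{r\bfx: r\geqslant 0\}$, which stays in one $P_{\pi}$) is identical in both proofs, and your necessity argument via linearity of $h$ on each cone $P_{\pi}$ is a legitimate filling-in of what the paper dismisses as "not difficult to see." Both arguments are sound; yours is slightly cleaner on sufficiency, the paper's is more informative.
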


\begin{proof}
It is not difficult to see that the conditions are necessary. So let us prove the sufficiency. Fix $\pi\in S_n$ and $\bfx\in P_{\pi}$. Then we
have
$$
\bfx=x_{\pi(1)}\mathbf{1}_{[n]}+\sum_{i=2}^n (x_{\pi(i)}-x_{\pi(i-1)})\mathbf{1}_{\{\pi(i),\ldots,\pi(n)\}}.
$$
By comonotonic additivity, we get
$$
h(\bfx)=h\big(x_{\pi(1)}\mathbf{1}_{[n]}\big)+\sum_{i=2}^n h\big((x_{\pi(i)}-x_{\pi(i-1)})\mathbf{1}_{\{\pi(i),\ldots,\pi(n)\}}\big).
$$
Also by comonotonic additivity, we have
$$
0=h(\mathbf{0})=h\big(\mathbf{1}_{[n]}-\mathbf{1}_{[n]}\big)=h\big(\mathbf{1}_{[n]}\big)+h\left(-\mathbf{1}_{[n]}\right)
$$
and hence $h\left(-\mathbf{1}_{[n]}\right)=-h\big(\mathbf{1}_{[n]}\big)$. Moreover, if $h(r\bfx)=r h(\bfx)$ for all $r>0$ (and even for $r=0$
since $h(\mathbf{0})=0$), then $h\left(r\mathbf{1}_{[n]}\right)=r h\left(\mathbf{1}_{[n]}\right)$ for all $r\in\R$ and hence
\begin{eqnarray*}
h(\bfx)&=& x_{\pi(1)}\, h\big(\mathbf{1}_{[n]}\big)+\sum_{i=2}^n (x_{\pi(i)}-x_{\pi(i-1)})h\big(\mathbf{1}_{\{\pi(i),\ldots,\pi(n)\}}\big)\\
&=& \sum_{i=1}^n(h_i^{\pi}-h_{i+1}^{\pi})\, x_{\pi(i)}
\end{eqnarray*}
where $h_i^{\pi}=h(\mathbf{1}_{\{\pi(i),\ldots,\pi(n)\}})$ for $i\in [n]$ and $h_{n+1}^{\pi}=h(\mathbf{1}_{\varnothing}).$

Let us now show that $h$ satisfies the positive homogeneity property as soon as it is continuous. Comonotonic additivity implies that
$g(n\bfx)=ng(\bfx)$ for every $\bfx\in\R^n$ and every positive integer $n$. For any positive integers $n,m$, we then have
$$
\frac mn\, h(\bfx)=\frac mn\, h\left(n\,\frac{\bfx}n\right)=m\, h\left(\frac{\bfx}n\right)=h\left(\frac mn\,\bfx\right)
$$
which means that $h(r\bfx)=r h(\bfx)$ for every positive rational $r$ and even for every positive real $r$ by continuity.
\end{proof}

In the following characterization of the signed Choquet integral, we will assume that the function to axiomatize is constructed from a signed
capacity. More precisely, denoting the set of signed capacities on $[n]$ by $\Sigma_n$, we now regard our function as a map
$f\colon\R^n\times\Sigma_n\to\R$, or equivalently, as the class $\{f_v\colon\R^n\to\R : v\in\Sigma_n\}$. We will adopt the latter terminology to
state our result, which is inspired from a characterization given in \cite{Mar00g}.

For every $T\subseteq [n]$, let $v_T\in\Sigma_n$ be the \emph{unanimity game} defined by $v_T(S)=1$, if $S\supseteq T$, and $0$, otherwise. Note
that the $v_T$ $(T\subseteq [n])$ form a basis (actually, the standard basis) for $\Sigma_n$. Indeed, for every $v\in\Sigma_n$, we have
$$
v=\sum_{T\subseteq [n]}m_v(T)\, v_T,
$$
where $m_v$ is the M\"obius transform of $v$.

\begin{theorem}\label{thm:sadf86}
If the class $\{f_v\colon\R^n\to\R : v\in\Sigma_n\}$ satisfies the following properties
\begin{enumerate}
\item[$(i)$] There exist $2^n$ functions $g_T\colon\R^n\to\R$ $(T\subseteq [n])$ such that
$$
f_v=\sum_{T\subseteq [n]}v(T)\, g_T\, ;
$$

\item[$(ii)$] For every $S\subseteq [n]$, we have $f_{v_S}(\bfx)=0$ whenever $x_i=0$ for some $i\in S$;

\item[$(iii)$] For every $S\subseteq [n]$, $r>0$, $s\in\R$, and $\bfx\in\R^n$, we have
$$
f_{v_S}(r\bfx+s\mathbf{1}_{[n]})=rf_{v_S}(\bfx)+s\, ;
$$
\end{enumerate}
then and only then $f_v=C_v$ for all $v\in\Sigma_n$.
\end{theorem}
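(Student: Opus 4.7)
The plan is to prove both directions, with sufficiency carrying all the weight. For necessity, I would quickly verify that $C_v$ satisfies (i)--(iii): linearity of $v\mapsto C_v$ (immediate from the defining sum, or from the M\"obius representation \eqref{eq:LovExtMob}) yields (i); the identity $C_{v_S}(\bfx)=\bigwedge_{i\in S}x_i$, which follows from \eqref{eq:LovExtMob} once one checks that the M\"obius transform of $v_S$ is $\delta_S$, yields both (ii) and (with $\bigwedge_{i\in S}(rx_i+s)=r\bigwedge_{i\in S}x_i+s$ for $r>0$) (iii).

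For sufficiency, the strategy is to first pin down $f_{v_S}$ on each basis element and then to transport the identification to arbitrary $v$ by linearity. Fix a nonempty $S\subseteq [n]$ and $\bfx\in\R^n$, set $m=\bigwedge_{i\in S}x_i$, and consider $\bfx'=\bfx-m\mathbf{1}_{[n]}$. Then $x'_i=0$ for at least one $i\in S$, so (ii) yields $f_{v_S}(\bfx')=0$. Applying (iii) with $r=1$ and $s=-m$ gives
$$
f_{v_S}(\bfx-m\mathbf{1}_{[n]})=f_{v_S}(\bfx)-m,
$$
hence $f_{v_S}(\bfx)=m=\bigwedge_{i\in S}x_i=C_{v_S}(\bfx)$. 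This is the one place where the axioms interact in a nontrivial way, and I regard it as the crux of the proof.

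To extend from the basis to all of $\Sigma_n$, observe that (i) makes $v\mapsto f_v$ a linear map from $\Sigma_n$ into the space of functions $\R^n\to\R$, because each coordinate $v\mapsto v(T)$ is linear in $v$. Likewise $v\mapsto C_v$ is linear, directly from the definition. Since $\{v_S:\varnothing\neq S\subseteq[n]\}$ is a basis of $\Sigma_n$ and the two linear maps coincide on this basis by the previous paragraph, we conclude $f_v=C_v$ for every $v\in\Sigma_n$.

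The only step that requires any real thought is the min-extraction from (ii) and (iii); everything else is bookkeeping around the linearity built into (i) together with the M\"obius formula \eqref{eq:LovExtMob}, which hands us $C_{v_S}=\bigwedge_{i\in S}x_i$ for free. One mild subtlety worth flagging in the write-up is that the ``basis'' $\{v_T\}$ strictly speaking means $T\neq\varnothing$ (since $v_\varnothing\notin\Sigma_n$), but this matches the $2^n-1$ dimensional space $\Sigma_n$ and causes no difficulty.
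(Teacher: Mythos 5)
Your proof is correct and follows essentially the same route as the paper: pin down $f_{v_S}(\bfx)=\bigwedge_{i\in S}x_i$ from $(ii)$ and $(iii)$, then extend to all of $\Sigma_n$ via the linearity encoded in $(i)$ and the unanimity-game (M\"obius) basis. Your min-extraction step is in fact slightly more direct than the paper's, which first rescales $\bfx$ into $[0,1]^n$ before performing the same shift by $\bigwedge_{i\in S}x_i$; as your one-line argument shows, that rescaling is not needed.
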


\begin{proof}
The sufficiency is straightforward, so let us prove the necessity. Given the relation between $v$ and $m_v$, condition $(i)$ is equivalent to
assuming the existence of $2^n$ functions $h_T\colon\R^n\to\R$ $(T\subseteq [n])$ such that
$$
f_v=\sum_{T\subseteq [n]}m_v(T)\, h_T.
$$
Thus $f_{v_T}=h_T$. Therefore, it suffices to prove the following claim.

\begin{claim}
For any fixed $T\subseteq [n]$, if the function $f_{v_T}\colon\R^n\to\R$ satisfies conditions $(ii)$ and $(iii)$, then
$f_{v_T}(\bfx)=\wedge_{i\in T}x_i$ for all $\bfx\in\R^n$.
\end{claim}
Let $\bfx\in\R^n$. If $x_1=\cdots =x_n$, then
$$
f_{v_T}(\bfx)=f_{v_T}\Big(\big(\bigwedge_{i\in [n]}x_i\big)\mathbf{1}_{[n]}\Big)=\bigwedge_{i\in [n]}x_i\, ,
$$
since $f_{v_T}(\mathbf{0})=0$ by $(iii)$.

Otherwise, if $\bigvee_{i\in [n]}x_i-\bigwedge_{i\in [n]}x_i \neq 0$, then by $(iii)$ we have
\begin{equation}\label{eq:3.3}
f_{v_T}(\bfx)=\textstyle{\Big(\bigvee_{i\in [n]}x_i-\bigwedge_{i\in [n]}x_i\Big)\, f_{v_T}(\bfx')+\bigwedge_{i\in [n]}x_i},
\end{equation}
where
$$
\bfx'=\frac{\bfx-\big(\bigwedge_{i\in [n]}x_i\big)\mathbf{1}_{[n]}}{\bigvee_{i\in [n]}x_i-\bigwedge_{i\in [n]}x_i}\in [0,1]^n.
$$
 By $(iii)$ and $(ii)$,
$$
f_{v_T}(\bfx')=\textstyle{f_{v_T}\Big(\bfx'-\big(\bigwedge_{i\in T}x'_i\big)\mathbf{1}_{[n]}\Big)+\bigwedge_{i\in T}x'_i}=\bigwedge_{i\in
T}x'_i.
$$
By (\ref{eq:3.3}), $f_{v_T}(\bfx)=\bigwedge_{i\in T}x_i$.
\end{proof}

Note that the conditions of Theorem~\ref{thm:sadf86} are independent. Indeed,
\begin{description}
\item[$(i),(iii)\not\Rightarrow (ii)$] Consider the class $\{f_v\colon\R^n\to\R : v\in\Sigma_n\}$ given by the weighted arithmetic mean
functions
$$
f_v(\bfx)=\sum_{T\subseteq [n]} m_v(T)\,\Big(\frac{1}{|T|}\sum_{i\in T} x_i\Big),
$$
where $m_v$ is the M\"obius transform of $v$.

\item[$(i),(ii)\not\Rightarrow (iii)$] Consider the class $\{f_v\colon\R^n\to\R : v\in\Sigma_n\}$ given by the multilinear polynomial functions
$$
f_v(\bfx)=\sum_{T\subseteq [n]} m_v(T)\,\prod_{i\in T} x_i\, ,
$$
where $m_v$ is the M\"obius transform of $v$.

\item[$(ii),(iii)\not\Rightarrow (i)$] Define the normalized capacity $v^*\in\Sigma_3$ by $v^*(\{1,2\})=v^*(\{3\})=0$ and
$v^*(\{1,3\})=v^*(\{2,3\})=1/2$ and consider the class $\{f_v\colon\R^3\to\R : v\in\Sigma_3\}$ given by $f_v=C_v$ for every
$v\in\Sigma_3\setminus\{v^*\}$, and
$$
f_{v^*}(x_1,x_2,x_3)=\Big(\frac{x_1+x_2}2\Big)\wedge x_3.
$$
\end{description}

\begin{remark}
\begin{enumerate}
\item[(a)] The conditions in Theorem~\ref{thm:sadf86} can be justified as follows. Condition $(i)$ expresses the fact that the aggregation model
is linear with respect to the underlying signed capacities. Condition $(ii)$ expresses minimal requirements on the functions defined on the
standard basis $\{v_S:S\subseteq [n]\}$ of $\Sigma_n$. Condition $(iii)$ expresses the fact that $f_{v_S}$ is meaningful with respect to
interval scales.

\item[(b)] The characterization given in Theorem~\ref{thm:sadf86} does not use the fact that $v(\varnothing)=0$. Therefore they can be
immediately adapted to Lov\'asz extensions by redefining $\Sigma_n$ as the set of set functions on $[n]$.
\end{enumerate}
\end{remark}





\end{document}